\numberwithin{equation}{section}
\newtheorem{theoremcounter}{theoremcounter}
\theoremstyle{plain}
\newtheorem{corollary}[theoremcounter]{Corollary}
\newtheorem{proposition}[theoremcounter]{Proposition}
\newtheorem{theorem}[theoremcounter]{Theorem}
\theoremstyle{definition}
\theoremstyle{remark}
\newcommandx{\unsure}[2][1=]{\todo[linecolor=red,backgroundcolor=red!25,bordercolor=red,#1]{#2}}
\newcommandx{\change}[2][1=]{\todo[linecolor=blue,backgroundcolor=blue!25,bordercolor=blue,#1]{#2}}
\newcommandx{\info}[2][1=]{\todo[linecolor=OliveGreen,backgroundcolor=OliveGreen!25,bordercolor=OliveGreen,#1]{#2}}
\newcommandx{\improvement}[2][1=]{\todo[linecolor=Plum,backgroundcolor=Plum!25,bordercolor=Plum,#1]{#2}}
\newcommand{\cU}{\ensuremath{\mathcal{U}}}
\newcommand{\rM}{\ensuremath{\mathrm{M}}}
\newcommand{\rS}{\ensuremath{\mathrm{S}}}
\newcommand{\rZ}{\ensuremath{\mathrm{Z}}}
\newcommand{\rmr}{\ensuremath{\mathrm{r}}}
\newcommand{\rms}{\ensuremath{\mathrm{s}}}
\newcommand{\ol}{\overline}
\newcommand{\eqstop}{\ensuremath{\, \text{.}}}
\newcommand{\ZZ}{\ensuremath{\mathbb{Z}}}
\newcommand{\CC}{\ensuremath{\mathbb{C}}}
\newcommand{\Aut}{\ensuremath{\mathrm{Aut}}}
\newcommand{\Cstar}{\ensuremath{\mathrm{C}^*}}
\newcommand{\supp}{\ensuremath{\mathop{\mathrm{supp}}}}
\newcommand{\Cstarred}{\ensuremath{\Cstar_\mathrm{red}}}
\newcommand{\ltwo}{\ensuremath{\ell^2}}
\newcommand{\freegrp}[1]{\ensuremath{\mathbb{F}}_{#1}}
\newcommand{\authors}{Sven Raum}
\renewcommand{\title}{Twisted group C*-algebras of acylindrically hyperbolic groups have stable rank one}
\begin{document}

%%%%%%%%
%% Front page
%%%%%%%%

\thispagestyle{empty}

\noindent
\begin{minipage}{\linewidth}
  \begin{center}
    \textbf{\Large \title} \\
    \authors    
  \end{center}
\end{minipage}

\renewcommand{\thefootnote}{}
 \footnotetext{last modified on \today}
\footnotetext{
  \textit{MSC classification: } 46L35, 20F67
}
\footnotetext{
  \textit{Keywords: } twisted group C*-algebra, acylindrically hyperbolic group, stable rank one
}

\vspace{2em}
\noindent
\begin{minipage}{\linewidth}
  \textbf{Abstract}.
  We prove that the twisted group C*-algebra of an acylindrically hyperbolic group -- not necessarily having trivial finite radical -- has stable rank one.
\end{minipage}

%%%%%%%%% 
%% Document body
%%%%%%%%%

\section{Introduction}
\label{sec:introduction}

The (topological) stable rank of a C*-algebra is an important invariant introduced in \cite{rieffel83} and has applications to non-stable K-theory.  In this 1983 article, Rieffel asked whether the reduced group C*-algebras of free groups have stable rank one, meaning that the group of invertible elements $\mathrm{GL}(\Cstarred(\freegrp{n}))$ is dense in $\Cstarred(\freegrp{n})$.  This question was answered affirmatively by Dykema--Haagerup-R{\o}rdam in \cite{dykemahaageruprordam97}.  Their method was generalised to torsion-free hyperbolic groups by Dykema--de la Harpe in \cite{dykemadelaharpe99}.  More than a decade later, the class of acylindrically hyperbolic groups crystallised as a common generalisation of non-elementary hyperbolic groups, mapping class groups of most punctured closed surfaces and outer automorphism groups of non-abelian free groups. In a MathOverflow post in 2013 Osin announced a proof that the reduced group C*-algebras of acylindrically hyperbolic groups with trivial finite radical have stable rank one \cite{osin2013-acylindrically-hyperbolic-stable-rank}, pointing out the open problem whether \emph{every} acylindrically hyperbolic group has stable rank one.  Several years later, the announced result was published in \cite{gerasimovaosin20} still leaving the question open whether the hypothesis on the finite radical can be removed.  While this result covers in particular all hyperbolic groups with torsion elements but with trivial finite radical, which were not covered by \cite{dykemadelaharpe99}, the remaining hypothesis excludes examples such as $\mathrm{GL}(2, \ZZ) \cong \mathrm{Out}(\freegrp{2})$ as well as the mapping class groups of the surfaces $S_{0,2}, S_{1,0}, S_{1,1}, S_{1,2}$ and $S_{2,0}$, where $S_{g,n}$ is the closed surface of genus $g$ with $n$ punctures.  All these groups have non-trivial finite centre -- see \cite[Section 3.4]{farbmargalit2011} for the case of mapping class groups.  The main result of the present note is the following theorem which solves the open problem from \cite{osin2013-acylindrically-hyperbolic-stable-rank,gerasimovaosin20} covering additionally twisted group C*-algebras. It is even new for the class of hyperbolic groups whose finite radical is non-trivial.
\begin{theorem}
  \label{thm:acyclindrically-hyperbolic-stable-rank-one}
  Let $\Gamma$ be an acylindrically hyperbolic group and $\sigma \in \rZ^2(\Gamma)$.  Then $\Cstarred(\Gamma, \sigma)$ has stable rank one.
\end{theorem}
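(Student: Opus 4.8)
The plan is to follow the Dykema--Haagerup--R{\o}rdam method for free-product C*-algebras, as specialised to hyperbolic groups by Dykema--de la Harpe and to acylindrically hyperbolic groups with trivial finite radical by Gerasimova--Osin, while accommodating both the cocycle $\sigma$ and a possibly non-trivial finite radical by isolating a virtually cyclic, hence type~I, piece on which stable rank one is automatic.

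The first task is the standard reductions. Stable rank one of the unital algebra $A := \Cstarred(\Gamma, \sigma)$ is the density of $\GL(A)$, and since the twisted convolution algebra $\CC[\Gamma, \sigma]$ is dense in $A$ it suffices, given a finitely supported $x = \sum_{g \in F}c_g \lambda^\sigma_g$ and $\veps > 0$, to find an invertible element within $\veps$ of $x$. Let $\tau$ be the canonical faithful trace on $A$ and $\mathcal{M} := A''$ the ambient finite von Neumann algebra. Using that $\tau(\chi_{\{0\}}(x^*x)) = \tau(\chi_{\{0\}}(xx^*))$, the polar decomposition $x = v\lvert x \rvert$ in $\mathcal{M}$ extends to $x = V\lvert x\rvert$ with $V \in \mathcal{M}$ unitary, and a short computation shows it is enough to produce, for each small $\delta > 0$, a partial isometry $w \in A$ with $w^*w$ and $ww^*$ respectively close to the spectral projections $\chi_{[0,\delta)}(\lvert x\rvert)$ and $\chi_{[0,\delta)}(\lvert x^*\rvert)$ of $\mathcal{M}$: gluing $w$ onto $x\,\psi_\delta(\lvert x\rvert)$ for a continuous $\psi_\delta$ that approximates $t \mapsto t^{-1}$ away from $0$ produces an element of $A$ that is approximately a unitary $W$ with $W\lvert x\rvert$ close to $x$, and then $W(\lvert x\rvert + \delta) \in \GL(A)$ is close to $x$. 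Since the relevant spectral projections lie in $\mathcal{M}$ rather than in $A$, in practice one matches instead the positive contractions $\phi_\delta(\lvert x\rvert), \phi_\delta(\lvert x^*\rvert) \in A$, where $\phi_\delta$ is continuous and squeezed between the indicators of $[0,\delta/2)$ and $[0,\delta)$.

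The problem is now to match these near-spectral projections approximately inside $A$, and this is where acylindrical hyperbolicity enters. By Dahmani--Guirardel--Osin, $\Gamma$ contains a loxodromic WPD element $t$; its elementary closure $E(t)$ is virtually cyclic and contains the finite radical $N := R(\Gamma)$, and after replacing $t$ by a power one may assume that conjugation by $t$ fixes $N$ pointwise. Then $u := \lambda^\sigma_t$ is a Haar unitary for $\tau$, and the WPD/acylindricity property turns conjugation by high powers $u^m$ into an asymptotic freeness: for any element whose Fourier support avoids $E(t)$, the family $\{u^m y u^{-m}\}_m$ becomes freely independent with respect to $\tau$, and more globally the inclusion $\Cstarred(E(t), \sigma) \hookrightarrow A$ behaves like that of a free factor, as hyperbolically embedded subgroups do. One splits $x$, and hence each projection to be matched, into its $E(t)$-part -- extracted by the canonical conditional expectation $A \to \Cstarred(E(t), \sigma)$ -- and the complementary part. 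The complementary part carries no mass along $E(t)$, so it can be spread out by conjugations $u^m$ and matched against its complement exactly by the Dykema--Haagerup--R{\o}rdam free-product estimates. The $E(t)$-part is dealt with inside $\Cstarred(E(t), \sigma)$: since $E(t)$ is virtually cyclic, this algebra is subhomogeneous -- a finite direct sum of homogeneous algebras over circles and points, the cocycle affecting only the bundle data -- and is readily seen to have stable rank one and cancellation, so that projections there of equal trace are approximately Murray--von Neumann equivalent; moreover $N$ being finite, $\Cstarred(N, \sigma) \cong \bigoplus_j \Mat{d_j}{\CC}$ is finite dimensional, and the nested conditional expectations $A \to \Cstarred(E(t), \sigma) \to \Cstarred(N,\sigma)$ govern how these amenable pieces sit inside the rest.

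I expect the delicate point to be precisely the seam between the ``hyperbolic'' and ``elementary'' parts of $x$: the free-probability spreading is available only away from $E(t)$, so the Dykema--Haagerup--R{\o}rdam projection-matching argument must be run with the non-movable block $\Cstarred(E(t),\sigma)$ present throughout as a fixed free factor, and the two resulting approximate matchings -- the one built by conjugation averaging and the one built inside $\Cstarred(E(t),\sigma)$ -- must be made compatible, uniformly as $\delta \to 0$. This is the step that genuinely goes beyond Gerasimova--Osin, where $R(\Gamma)$ is trivial and one may take $t$ with $E(t) = \langle t\rangle$, so that $\Cstarred(E(t),\sigma) \cong \cont(\bT)$ is a single circle factor and essentially no extra block appears. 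By contrast the cocycle $\sigma$ enters all of the moment computations only through unimodular phases and should not pose an essential difficulty once the framework is set up.
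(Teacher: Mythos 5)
Your plan has a genuine gap, and it sits exactly at the point you yourself flag as ``delicate''. The step of running the Dykema--Haagerup--R{\o}rdam projection-matching with $\Cstarred(E(t),\sigma)$ kept as a ``fixed free factor'', and making the matching built by conjugation averaging compatible with the one built inside the subhomogeneous block uniformly as $\delta \to 0$, is not a technicality to be checked later -- it is the whole open problem, and you give no argument for it. Note that $\Cstarred(E(t),\sigma) \subseteq \Cstarred(\Gamma,\sigma)$ is not a reduced free product inclusion, so the DHR free-product estimates do not apply to it as stated, and the asymptotic freeness of $\{u^m y u^{-m}\}_m$ for $y$ supported off $E(t)$ is asserted, not proved, in the form you need. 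What Gerasimova--Osin actually use is R{\o}rdam's criterion in the Dykema--de la Harpe formulation: for every finite $F \subseteq \Gamma$ one needs $t$ such that $tF$ generates a free subsemigroup and elements supported on $tF$ have matching spectral and $\ell^2$-spectral radii. A non-trivial finite radical $K$ obstructs this irreparably: if $F$ contains two elements $x,y$ with $xy^{-1} \in K \setminus \{e\}$, then $tx$ and $ty$ lie in the virtually cyclic group $\langle ty \rangle K$, which contains no free subsemigroup of rank two, for any choice of $t$. So the criterion cannot simply be rerun ``relative to $E(t)$'' without a genuinely new matching argument, which your sketch does not supply.

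The paper avoids this analytic seam entirely by a structural reduction rather than a relative free-probability argument. One passes to $\Lambda = \Gamma/K$, which is acylindrically hyperbolic with trivial finite radical (Minasyan--Osin), writes $\Cstarred(\Gamma,\sigma)$ as a twisted reduced crossed product of the finite-dimensional algebra $\CC[K,\sigma]$ by $\Lambda$ (B{\'e}dos, Packer--Raeburn), decomposes this multi-matrix coefficient algebra into $\Lambda$-simple summands, and applies Green's imprimitivity theorem to identify each summand with $\Cstarred(\Upsilon,\nu) \otimes \rM_n(\CC) \otimes \rM_{\Gamma/\Upsilon}(\CC)$ for a finite-index subgroup $\Upsilon \leq \Lambda$ and some cocycle $\nu$. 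Since acylindrical hyperbolicity with trivial finite radical passes to finite-index subgroups, the twisted trivial-radical case (Gerasimova--Osin combined with the cocycle transfer estimate for rapid-decay type bounds) applies to each $\Cstarred(\Upsilon,\nu)$, and stable rank one is preserved by matrix amplification and finite direct sums. In other words, the role you want $\Cstarred(E(t),\sigma)$ to play is instead played by quotienting out $K$ and re-importing it as a finite-dimensional coefficient algebra; if you want to salvage your direct approach, you would need to prove the relative matching statement you postulate, for which no tool currently exists in the literature.
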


Our argument relies on classical decomposition results for twisted group C*-algebras combined with the following Proposition, transferring rapid-decay type estimates from untwisted to twisted group \mbox{C*-algebras}.
\begin{proposition}
  \label{prop:transfer-RD-estimate}
  Let $\Gamma$ be a group, $S \subseteq \Gamma$ a subset and $C > 0$.  Assume that for every $a \in \CC\Gamma$ with $\supp a \subseteq S$ we have $\|a\|_{\Cstarred(\Gamma)} \leq C \|a\|_2$.  Then for every 2-cocycle $\sigma \in \rZ^2(\Gamma, \rS^1)$ and every $a \in \CC\Gamma$ with $\supp a \subseteq S$ we have $\|a\|_{\Cstarred(\Gamma, \sigma)} \leq C \|a\|_2$.
\end{proposition}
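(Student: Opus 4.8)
The plan is to transfer the estimate from $\Cstarred(\Gamma)$ to $\Cstarred(\Gamma,\sigma)$ by a pointwise domination argument, using the only feature of an $\rS^1$-valued $2$-cocycle that matters here, namely that all of its values have modulus one. I realise $\Cstarred(\Gamma,\sigma)$ concretely as the C*-algebra generated by the left $\sigma$-regular representation $\lambda_\sigma$ on $\ltwo(\Gamma)$, which we normalise by $\lambda_\sigma(g)\delta_h = \sigma(g,h)\delta_{gh}$, so that $\|a\|_{\Cstarred(\Gamma,\sigma)} = \|\lambda_\sigma(a)\|_{\bo(\ltwo(\Gamma))}$; for the trivial cocycle this reads $\|a\|_{\Cstarred(\Gamma)} = \|\lambda(a)\|_{\bo(\ltwo(\Gamma))}$ with $\lambda$ the ordinary left regular representation.

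First I would write the action of $\lambda_\sigma(a)$ on $\xi \in \CC\Gamma$ explicitly: $(\lambda_\sigma(a)\xi)(x) = \sum_{g \in \Gamma} a(g)\,\sigma(g,g^{-1}x)\,\xi(g^{-1}x)$ for $x \in \Gamma$, the sum being finite. Since every factor $\sigma(g,g^{-1}x)$ has modulus $1$, the triangle inequality yields the pointwise bound $|(\lambda_\sigma(a)\xi)(x)| \le \sum_{g \in \Gamma} |a(g)|\,|\xi(g^{-1}x)| = (\lambda(|a|)\,|\xi|)(x)$, where $|a|,|\xi| \in \CC\Gamma$ denote the coefficientwise moduli. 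Passing to $\ltwo$-norms and using that the function $\lambda(|a|)\,|\xi|$ is nonnegative, this gives $\|\lambda_\sigma(a)\xi\|_2 \le \|\lambda(|a|)\,|\xi|\|_2 \le \|\lambda(|a|)\|_{\bo(\ltwo(\Gamma))}\,\|\xi\|_2$.

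The second and final step is to feed the hypothesis the element $|a|$ rather than $a$ itself: because $\supp|a| = \supp a \subseteq S$, the assumption applies to $|a|$ and gives $\|\lambda(|a|)\|_{\bo(\ltwo(\Gamma))} = \||a|\|_{\Cstarred(\Gamma)} \le C\,\||a|\|_2 = C\,\|a\|_2$. Combining the two estimates, $\|\lambda_\sigma(a)\xi\|_2 \le C\,\|a\|_2\,\|\xi\|_2$ for every $\xi \in \CC\Gamma$, and density of $\CC\Gamma$ in $\ltwo(\Gamma)$ then gives $\|a\|_{\Cstarred(\Gamma,\sigma)} = \|\lambda_\sigma(a)\|_{\bo(\ltwo(\Gamma))} \le C\,\|a\|_2$, as required.

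I do not expect any serious obstacle here: the whole content is the observation that a unimodular cocycle changes only the phases of the matrix coefficients of $\lambda_\sigma(a)$, so that $\lambda(|a|)$ dominates $\lambda_\sigma(a)$ at the level of $\ell^2$-operators, while the fact that $\supp|a|$ still lies inside $S$ is precisely what keeps the untwisted hypothesis applicable. The only point that requires the slightest care is to apply the assumption to $|a|$ and not to $a$, and to read the hypothesis as the inequality $\|\lambda(b)\xi\|_2 \le C\|b\|_2\|\xi\|_2$ valid for all $b \in \CC\Gamma$ with $\supp b \subseteq S$ and all $\xi \in \ltwo(\Gamma)$.
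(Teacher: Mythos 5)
Your proof is correct and is essentially the paper's own argument in different clothing: the paper bounds the coefficients of the twisted convolution $a *_\sigma b$ by those of $a^+ * b^+$ (where $a^+$ is your $|a|$) and then applies the hypothesis to $a^+$, which is exactly your pointwise domination $|\lambda_\sigma(a)\xi| \le \lambda(|a|)|\xi|$ followed by feeding $|a|$ to the untwisted estimate. No gap; the only difference is presentational (operators on $\ltwo(\Gamma)$ versus the sup of $\|a *_\sigma b\|_2/\|b\|_2$ over $b \in \CC\Gamma$).
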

Upon publication of this note, the author was informed that B{\'e}dos-Omland had made a similar observation in \cite[Appendix A, proof of Lemma A.2]{bedosomland2018-twisted}, inferring stable rank one for certain twisted group C*-algebras.  This result could alternatively be used in our proof of Theorem~\ref{thm:acyclindrically-hyperbolic-stable-rank-one}.  However, as the formulation of Proposition~\ref{prop:transfer-RD-estimate} also leads to the observation made in Corollary~\ref{cor:norm-normal-element}, we decided to keep it in its present form.

\subsection*{Acknowledgements}

The author thanks Hannes Thiel for inspiring conversations about the stable rank of group C*-algebras, motivating us to write down the present note. We also thank Dennis Osin for pointing out improvements to the introduction of an earlier version of this work.  We thank Jamie Bell and Tron Omland for pointing out the Appendix of \cite{bedosomland2018-twisted} and Tron Omland for pointing out \cite{bedos1991-simple-cstar-algebras}.

\section{Proofs}
\label{sec:proofs}

\begin{proof}[Proof of Proposition~\ref{prop:transfer-RD-estimate}]
  Fix a 2-cocycle $\sigma \in \rZ^2(\Gamma, \rS^1)$ and identify $\CC \Gamma$ with $\CC(\Gamma, \sigma)$ as vector spaces.  We denote by $*$ the convolution product in $\CC \Gamma$ and by $*_\sigma$ the convolution product in $\CC(\Gamma, \sigma)$.  Further, for $a = \sum_g a_g u_g \in \CC \Gamma$ we write $a^+ = \sum_g |a_g| u_g \in \CC \Gamma$.  For $a, b \in \CC \Gamma$ the $g$-th coefficient of the twisted convolution product satisfies
  \begin{gather*}
    (a *_\sigma b)^+_g
    =
    |\sum_{g = xy} \sigma(x,y)a_xb_y|
    \leq
    \sum_{g = xy} |\sigma(x,y)a_xb_y|
    =
    \sum_{g = xy} |a_x| \cdot |b_y|
    =
    (a^+ * b^+)_g
    \eqstop
  \end{gather*}
  Fix now $a \in \CC \Gamma$ with $\supp a \subseteq S$.  Then we conclude
  \begin{gather*}
    \|a\|_{\Cstarred(\Gamma, \sigma)}
    =
    \sup_{b \in \CC \Gamma \setminus \{0\}} \frac{ \|a *_\sigma b\|_2}{\|b\|_2}
    =
    \sup_{b \in \CC \Gamma \setminus \{0\}} \frac{ \|(a *_\sigma b)^+\|_2}{\|b^+\|_2}
    \leq
    \sup_{b \in \CC \Gamma \setminus \{0\}} \frac{ \|a^+ * b^+\|_2}{\|b^+\|_2}
    \leq
    C \|a\|_2
    \eqstop
  \end{gather*}
  This is what we had to show.
\end{proof}

There is essentially one strategy to prove stable rank one of group C*-algebras, which is based on R{\o}rdam's result \cite[Theorem 2.6]{rordam88} and originates in \cite{dykemahaageruprordam97}.  We use a generalisation of a formulation from \cite[Theorem 1.4]{dykemadelaharpe99}, which also applies to twisted group C*-algebras.  Given a group $\Gamma$ let us write $\rmr(a)$ for the spectral radius of $a \in \Cstarred(\Gamma)$. If further a 2-cocycle $\sigma \in \rZ^2(\Gamma, \rS^1)$ is given, we write $\rmr_\sigma(a)$ for the spectral radius of $a \in \Cstarred(\Gamma, \sigma)$.  Finally $\rmr_2(a) = \lim_n \|a^n\|_2^{1/n}$ for the $\ltwo$-spectral radius of $a \in \ltwo(\Gamma)$.
\begin{theorem}[Compare {\cite[Theorem 1.4]{dykemadelaharpe99}}]
  \label{thm:dykema-delaharpe}
  Let $\Gamma$ be a group and $\sigma \in \rZ^2(\Gamma, \rS^1)$ a 2-cocycle.  Assume that for any finite subset $F \subseteq \Gamma$ and there is $t \in \Gamma$ such that $tF$ generates a free subsemigroup and every $a$ in $\CC(\Gamma, \sigma)$ with $\supp(a) \subseteq tF$ satisfies $\rmr_\sigma(a) = \rmr_2(a)$.  Then $\Cstarred(\Gamma, \sigma)$ has stable rank one.
\end{theorem}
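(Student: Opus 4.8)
The plan is to follow the proof of \cite[Theorem 1.4]{dykemadelaharpe99} (which in turn rests on \cite{dykemahaageruprordam97} and \cite[Theorem 2.6]{rordam88}) and to verify that every step is insensitive to the $\rS^1$-valued cocycle $\sigma$. As a first reduction, by \cite[Theorem 2.6]{rordam88} it suffices to approximate, for each $\varepsilon > 0$, every element of a dense subset of $\Cstarred(\Gamma, \sigma)$ by an invertible element to within $\varepsilon$; by density of $\CC(\Gamma, \sigma)$ we may restrict to $a \in \CC(\Gamma, \sigma)$. (The stable finiteness underlying this approach is automatic: the canonical trace $\sum_g a_g u_g \mapsto a_e$ is faithful and tracial on $\Cstarred(\Gamma, \sigma)$, using only that $\sigma$ is unimodular and normalised.)

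Next I would move the support of $a$ into a free subsemigroup. Put $F := \supp a$ and choose $t \in \Gamma$ as in the hypothesis, so that $tF$ generates a free subsemigroup of $\Gamma$ and $\rmr_\sigma(b) = \rmr_2(b)$ for every $b \in \CC(\Gamma, \sigma)$ with $\supp b \subseteq tF$. Writing $a = \sum_g a_g u_g$, the twisted product $u_t a = \sum_g \sigma(t,g)\, a_g\, u_{tg}$ has support contained in $tF$, and $|\sigma(t,g)| = 1$ gives $\|u_t a\|_2 = \|a\|_2$. Since left multiplication by the unitary $u_t \in \Cstarred(\Gamma, \sigma)$ is an isometric bijection that preserves invertibility, approximating $u_t a$ by invertibles is equivalent to approximating $a$ by invertibles; after renaming we may therefore assume $\supp a \subseteq E := tF$, where $E$ generates a free subsemigroup and $\rmr_\sigma(b) = \rmr_2(b)$ for all $b$ supported in $E$.

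It then remains to carry out the analytic core of \cite{dykemahaageruprordam97, dykemadelaharpe99} for an element $a$ supported on a free-subsemigroup generating set $E$. Since the words in $E$ are pairwise distinct in $\Gamma$, the $(h_{i_1}\cdots h_{i_n})$-coefficient of the $n$-fold twisted convolution power $a^{*_\sigma n}$ equals $a_{h_{i_1}}\cdots a_{h_{i_n}}$ up to a unimodular cocycle factor; hence $\|a^{*_\sigma n}\|_2 = \|a\|_2^{\,n}$, so $\rmr_2(a) = \|a\|_2$ --- a quantity not involving $\sigma$ --- and, by the hypothesis, $\rmr_\sigma(a) = \|a\|_2$. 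This is exactly the input of the Fock-space model of \cite{dykemahaageruprordam97}: under the left regular $\sigma$-representation the unitaries $u_h$ with $h \in E$ act on a suitable cyclic subspace as creation operators composed with unimodular diagonal operators, and the equality $\rmr_\sigma(a) = \|a\|_2$ makes the construction in the proof of \cite[Theorem 1.4]{dykemadelaharpe99} go through, producing an invertible element of $\Cstarred(\Gamma, \sigma)$ within $\varepsilon$ of $a$. Multiplying on the left by $u_t^{-1}$ then yields an invertible within $\varepsilon$ of the original $a$, so $\Cstarred(\Gamma, \sigma)$ has stable rank one.

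The step I expect to be the main obstacle is this last one: transcribing the construction of \cite[Theorem 1.4]{dykemadelaharpe99} and checking that it is phase-blind. The only places where the twisted setting can differ from the untwisted one are identities $u_x u_y = u_{xy}$ of the group algebra, which in $\CC(\Gamma, \sigma)$ acquire a scalar $\sigma(x,y)$ of modulus one; such a scalar is invisible to every norm, $\ltwo$-norm and spectral radius occurring in the argument, exactly as in the proof of \cref{prop:transfer-RD-estimate}. I expect all occurrences to be of this harmless kind, so that the cited proof carries over verbatim, but confirming this is where the actual work lies.
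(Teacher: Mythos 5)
Your proposal follows essentially the same route the paper takes: the paper states this theorem without proof, citing it as a direct generalisation of \cite[Theorem 1.4]{dykemadelaharpe99} and implicitly asserting that that argument is blind to the unimodular $2$-cocycle, which is exactly your plan, and your auxiliary checks (left multiplication by the twisted unitary $u_t$, the $\ell^2$-spectral radius computation over a free subsemigroup, faithfulness and traciality of the canonical trace on $\Cstarred(\Gamma,\sigma)$) correctly cover the places where $\sigma$ could enter. The only caveat is the one you flag yourself: like the paper, you delegate the detailed transcription of the Dykema--de la Harpe/R{\o}rdam construction to the cited reference rather than carrying it out.
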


Combining Proposition~\ref{prop:transfer-RD-estimate} with the generalised version of Dykema--de la Harpe's criterion above, the next result follows from the same proof as \cite[Theorem 1.1]{gerasimovaosin20}.
\begin{corollary}
  \label{cor:acylindrically-hyperbolic-trivial-finite-radical-twisted}
  Let $\Gamma$ be an acylindrically hyperbolic group with trivial finite radical and let $\sigma \in \rZ^2(\Gamma, \rS^1)$ be a 2-cocycle.  Then the twisted group \Cstar-algebra $\Cstarred(\Gamma, \sigma)$ has stable rank one.
\end{corollary}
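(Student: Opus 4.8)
The plan is to verify the hypotheses of Theorem~\ref{thm:dykema-delaharpe} for the pair $(\Gamma,\sigma)$. Thus, given a finite subset $F\subseteq\Gamma$, I would have to produce an element $t\in\Gamma$ such that $tF$ generates a free subsemigroup of $\Gamma$ and every $a\in\CC(\Gamma,\sigma)$ with $\supp(a)\subseteq tF$ satisfies $\rmr_\sigma(a)=\rmr_2(a)$. One of the two inequalities is automatic: the left regular $\sigma$-representation is isometric on $\ltwo(\Gamma)$, so $\|a^{*_\sigma n}\|_2=\|a^{*_\sigma n}*_\sigma\delta_e\|_2\leq\|a^{*_\sigma n}\|_{\Cstarred(\Gamma,\sigma)}$ for all $n$, whence $\rmr_2(a)\leq\rmr_\sigma(a)$. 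All the content therefore lies in the reverse inequality, and the idea is to reduce it to the untwisted estimate already proved by Gerasimova--Osin, using Proposition~\ref{prop:transfer-RD-estimate} to carry the cocycle along for free.

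Recall how the proof of \cite[Theorem 1.1]{gerasimovaosin20} goes in the untwisted case. Using that $\Gamma$ is acylindrically hyperbolic with trivial finite radical -- which supplies loxodromic/WPD elements and hyperbolically embedded virtually cyclic subgroups with the required geometric control -- one produces, for each finite $F\subseteq\Gamma$, an element $t\in\Gamma$ such that $tF$ generates a free subsemigroup and, moreover, a subexponential (in fact polynomial) function $P$, depending only on $F$ and $t$ and not on any cocycle, with
\[
  \|b\|_{\Cstarred(\Gamma)}\leq P(n)\,\|b\|_2\qquad\text{whenever }\supp(b)\subseteq(tF)^n\eqstop
\]
This is a rapid-decay--type bound along the specific sequence of sets $\bigl((tF)^n\bigr)_n$, and it is precisely the hypothesis of Proposition~\ref{prop:transfer-RD-estimate} with $S=(tF)^n$. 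Keeping the same $t$, I would invoke that proposition for each $n$ to obtain the twisted bound $\|b\|_{\Cstarred(\Gamma,\sigma)}\leq P(n)\,\|b\|_2$ for every $b\in\CC\Gamma$ with $\supp(b)\subseteq(tF)^n$.

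Now fix $a\in\CC(\Gamma,\sigma)$ with $\supp(a)\subseteq tF$. Then $\supp(a^{*_\sigma n})\subseteq(tF)^n$, so $\|a^{*_\sigma n}\|_{\Cstarred(\Gamma,\sigma)}\leq P(n)\,\|a^{*_\sigma n}\|_2$; taking $n$-th roots and using that $P$ is subexponential gives $\rmr_\sigma(a)\leq\lim_n\|a^{*_\sigma n}\|_2^{1/n}$. Because $tF$ generates a free subsemigroup, every element of $(tF)^n$ has a unique length-$n$ factorization, so there are no cancellations in the twisted convolution; since in addition $|\sigma|\equiv 1$, the coefficientwise identity $(a^{*_\sigma n})^+=(a^+)^{*n}$ holds, and in particular $\|a^{*_\sigma n}\|_2=\|(a^+)^{*n}\|_2$ is independent of $\sigma$ and equals the $\ltwo$-norm computed with the ordinary convolution. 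Hence $\lim_n\|a^{*_\sigma n}\|_2^{1/n}=\rmr_2(a)$, and combined with the trivial reverse inequality we obtain $\rmr_\sigma(a)=\rmr_2(a)$. Theorem~\ref{thm:dykema-delaharpe} then yields that $\Cstarred(\Gamma,\sigma)$ has stable rank one.

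The main obstacle is not genuinely new here: the entire geometric input -- the construction of a suitable $t$ for each finite $F$, the freeness of the subsemigroup generated by $tF$, and the polynomial bound on $\Cstarred(\Gamma)$-norms of elements supported on $(tF)^n$ -- is carried out in \cite{gerasimovaosin20} and is unaffected by the presence of $\sigma$. The only point demanding care is to note that their argument really does deliver a bound of the displayed rapid-decay shape, uniform over all $b$ supported on $(tF)^n$ and with constants independent of the cocycle, so that Proposition~\ref{prop:transfer-RD-estimate} applies verbatim; once this is observed, the twisted case follows by the same proof.
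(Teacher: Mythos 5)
Your proposal is correct and follows essentially the same route as the paper: the paper's proof of this corollary is exactly to combine Proposition~\ref{prop:transfer-RD-estimate} with the twisted Dykema--de la Harpe criterion (Theorem~\ref{thm:dykema-delaharpe}) and run the Gerasimova--Osin argument unchanged, which is what you spell out. Your additional observation that $(a^{*_\sigma n})^+=(a^+)^{*n}$ on the free subsemigroup, so the $\ltwo$-data is cocycle-independent, is a correct and welcome elaboration of why the untwisted spectral-radius computation carries over verbatim.
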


The corollary above together with Packer-Raeburn's theory of twisted crossed products and Green's imprimitivity theorem allow to obtain a positive answer to the problem raised by Osin \cite{osin2013-acylindrically-hyperbolic-stable-rank} and Gerasimova-Osin \cite{gerasimovaosin20}.
\begin{proof}[Proof of Theorem~\ref{thm:acyclindrically-hyperbolic-stable-rank-one}]
  Let $K \unlhd \Gamma$ be the finite radical and write $\Lambda = \Gamma/K$.  Then by \cite[Lemma 3.9]{minasyanosin2015-acylindrically-hyperbolic}  (see also \cite[Lemma 1]{minasyanosin2019-acylindrically-hyperbolic-correction}) it follows that $\Lambda$ is acylindrically hyperbolic with trivial finite radical.  After choosing a section $\rms \colon \Lambda \to \Gamma$ satisfying $\rms(e) = e$, we infer from \cite[Theorem 2.1]{bedos1991-simple-cstar-algebras} (see also \cite[Theorem 4.1]{packerraeburn89}) that there is an identification with a twisted crossed product $\Cstarred(\Gamma, \sigma) \cong \CC[K, \sigma] \rtimes_{\alpha, \rho, \mathrm{red}} \Lambda$.  Here the maps $\alpha \colon \Lambda \to \Aut(\CC[K, \sigma])$ and $\rho \colon \Lambda \times \Lambda \to \cU(\CC[K, \sigma])$  are given on $h, h_1, h_2 \in \Lambda$ and $k \in K$ by the formulas 
  \begin{align*}
    \alpha_h(u_k) & = \sigma(\rms(h), k) \sigma(\rms(h)  k \rms(h)^{-1}, \rms(h)) u_{\rms(h)k\rms(h)^{-1}} \\
    \rho(h_1,h_2) & = \sigma(\rms(h_1), \rms(h_2)) \ol{\sigma(\rms(h_1)\rms(h_2)\rms(h_1h_2)^{-1}, \rms(h_1h_2))} u_{\rms(h_1)\rms(h_2)\rms(h_1h_2)^{-1}}
    \eqstop
  \end{align*}
  Since $\CC[K, \sigma]$ is finite dimensional, it is a multi-matrix algebra.  So the twisted C*-dynamical system $(\CC[K, \sigma], \Lambda, \alpha, \rho)$ decomposes as a direct sum of $\Lambda$-simple dynamical systems.  By \cite[Theorem 2.13]{green78} each direct summand is isomorphic with 
  \begin{gather*}
    \Cstarred(\Upsilon, \nu) \otimes \rM_n(\CC) \otimes \rM_{\Gamma/\Upsilon}(\CC)
  \end{gather*}
  for a finite index subgroup $\Upsilon \leq \Lambda$ and a 2-cocyle $\nu \in \rZ^2(\Upsilon, \rS^1)$ depending on the direct summand.  As the class of acylindrically hyperbolic groups with trivial finite radical is stable under passage to finite index subgroups by \cite[Theorem 2.35]{dahmaniguirardelosin17}, we can apply Corollary~\ref{cor:acylindrically-hyperbolic-trivial-finite-radical-twisted} which shows that $\Cstarred(\Gamma, \sigma)$ is a finite direct sum of C*-algebras with stable rank one.  We conclude by remarking that stable rank one is stable under taking finite direct sums.
\end{proof}

As a byproduct of Proposition~\ref{prop:transfer-RD-estimate} we observe the following property, which to the best of our knowledge has not been previously observed for any class of groups.
\begin{corollary}
  \label{cor:norm-normal-element}
  Let $\Gamma$ be an acylindrically hyperbolic group with trivial finite radical and let $a \in \CC \Gamma$ be normal.  Then for every 2-cocycle $\sigma \in \rZ^2(\Gamma, \rS^1)$ we have $\|a\|_{\Cstarred(\Gamma, \sigma)} = \|a\|_{\Cstarred(\Gamma)}$.
\end{corollary}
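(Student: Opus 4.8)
The plan is to show that \emph{both} norms are equal to the $\ltwo$-spectral radius $\rmr_2(a)$. For the untwisted side this uses only normality of $a$, no hyperbolicity: by the C*-identity $\|a\|_{\Cstarred(\Gamma)}^2=\|a^*a\|_{\Cstarred(\Gamma)}$, and since $a^*a$ is a positive element of the tracial von Neumann algebra $\grpvn\Gamma$ with its canonical trace $\tau$, one has $\|a^*a\|_{\Cstarred(\Gamma)}=\lim_n\tau((a^*a)^n)^{1/n}$. This is the standard moment argument: $\tau((a^*a)^n)=\int\lambda^n\,d\mu$ for the spectral measure $\mu$ of $a^*a$, a probability measure on $[0,\|a^*a\|_{\Cstarred(\Gamma)}]$, so the $n$-th root of the integral tends to $\max\supp\mu=\|a^*a\|_{\Cstarred(\Gamma)}$. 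Normality gives $(a^*a)^n=(a^n)^*a^n$, hence $\tau((a^*a)^n)=\|a^n\|_2^2$, and therefore $\|a\|_{\Cstarred(\Gamma)}=\lim_n\|a^n\|_2^{1/n}=\rmr_2(a)$ --- a quantity depending only on the coefficients of $a$ and the \emph{untwisted} convolution, in particular independent of $\sigma$.

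For the twisted side the same C*-identity and moment argument, now applied to the $*_\sigma$-positive element $a^{*_\sigma}*_\sigma a$ in $\grpvn(\Gamma,\sigma)$, give $\|a\|_{\Cstarred(\Gamma,\sigma)}^2=\|a^{*_\sigma}*_\sigma a\|_{\Cstarred(\Gamma,\sigma)}=\lim_n\tau_\sigma\bigl((a^{*_\sigma}*_\sigma a)^{*_\sigma n}\bigr)^{1/n}$, so it remains to see that this limit equals $\rmr_2(a)^2$. This is where acylindrical hyperbolicity and Proposition~\ref{prop:transfer-RD-estimate} come in, via the rapid-decay type estimate underlying the proof of Corollary~\ref{cor:acylindrically-hyperbolic-trivial-finite-radical-twisted}: for any finite $F\subseteq\Gamma$ there is $t\in\Gamma$ such that $tF$ generates a free subsemigroup and $\|c\|_{\Cstarred(\Gamma)}\le P(\ell)\,\|c\|_2$ for all $c\in\CC\Gamma$ with $\supp c\subseteq(tF)^\ell$, with $P$ a polynomial; by Proposition~\ref{prop:transfer-RD-estimate} the same bound holds with $\Cstarred(\Gamma,\sigma)$ in place of $\Cstarred(\Gamma)$. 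Taking $F=\supp(a)\cup\supp(a)^{-1}\cup\{e\}$ and using that $u_t$ is a unimodular multiple of a unitary in $\Cstarred(\Gamma,\sigma)$ --- so that convolution by $u_t$ is norm preserving and moves $\supp(a)$ into $tF$ --- one is reduced to controlling $n$-th $*_\sigma$-powers of elements supported on powers of the free subsemigroup $tF$. On a free subsemigroup products are collision-free, so in each such $n$-fold $*_\sigma$-product every coefficient is a single unimodular multiple of the corresponding coefficient of the untwisted power; hence the $\ell^2$-norms of these twisted powers agree with their untwisted counterparts, the $\sigma$-dependence washes out upon taking $n$-th roots against the polynomial factor, and one obtains $\|a\|_{\Cstarred(\Gamma,\sigma)}=\rmr_2(a)$.

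The hard part is that, although $a$ is normal in $\CC\Gamma$, it need not be normal in $\CC(\Gamma,\sigma)$: the involution is twisted by the phases $\sigma(g,g^{-1})$, and already $a^{*_\sigma}*_\sigma a\ne a*_\sigma a^{*_\sigma}$ in general, so $\|a\|_{\Cstarred(\Gamma,\sigma)}$ is not a spectral radius and the C*-identity forces one to deal with $a^{*_\sigma}*_\sigma a$, whose support lies in the difference set $\supp(a)^{-1}\supp(a)$ rather than in a power of the free subsemigroup $tF$ --- so the collision-free mechanism does not apply to it directly. The crux is therefore to prove that the asymptotic $\ell^2$-growth of $(a^{*_\sigma}*_\sigma a)^{*_\sigma n}$ is insensitive to $\sigma$; the natural route is to compare, via the triangle-inequality step in the proof of Proposition~\ref{prop:transfer-RD-estimate}, the coefficients of $(a^{*_\sigma}*_\sigma a)^{*_\sigma n}$ with those of $(a^*a)^n$ and to show that for a \emph{normal} $a$ the loss in that comparison is subexponential in $n$ --- equivalently, that $\rmr_\sigma(a^{*_\sigma}*_\sigma a)=\rmr(a^*a)$ --- after which the rapid-decay estimate transferred by Proposition~\ref{prop:transfer-RD-estimate} kills the discrepancy in the limit.
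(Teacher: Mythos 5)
Your untwisted half is fine (normality plus faithfulness of the canonical trace gives $\|a\|_{\Cstarred(\Gamma)}=\rmr_2(a)$ without any hyperbolicity), and your diagnosis of where the difficulty sits is accurate: $a$ need not be normal for the twisted involution, so $\|a\|_{\Cstarred(\Gamma,\sigma)}$ is not a spectral radius of $a$, and the C*-identity leads to $a^{*_\sigma}*_\sigma a$, which is not reached by the free-subsemigroup, collision-free mechanism. But at exactly this point the proposal stops being a proof: the statement you call the crux --- that the $\ell^2$-growth of $(a^{*_\sigma}*_\sigma a)^{*_\sigma n}$ is insensitive to $\sigma$, equivalently $\rmr_\sigma(a^{*_\sigma}*_\sigma a)=\rmr(a^*a)$ --- is only announced, with a hoped-for ``subexponential loss'' in the coefficientwise comparison of Proposition~\ref{prop:transfer-RD-estimate}. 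That comparison replaces coefficients by absolute values and therefore destroys precisely the cancellations that govern $(a^*a)^n$; it can only bound twisted data by data of $a^+$, and $\|a^+\|$ generically exceeds $\|a\|$, so no subexponential-loss argument comes out of it. This differs from the paper, whose proof does not touch $a^{*_\sigma}*_\sigma a$ at all: it runs the proof of \cite[Corollary 5.5]{gerasimovaosin20} together with Proposition~\ref{prop:transfer-RD-estimate} to obtain the chain $\rmr(a)=\rmr_2(a)=\rmr_\sigma(a)$ and then invokes normality.

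Worse, the identity you would need is simply false in the generality in which you use it, so the gap cannot be closed along the indicated route. Take $x\in\Gamma$ of infinite order and $a=u_e+u_x+u_{x^2}$, which is normal in $\CC\Gamma$ (it lies in the commutative $*$-algebra of $\langle x\rangle$) and satisfies $a^+=a$. Choose $\mu\colon\Gamma\to\rS^1$ with $\mu(e)=\mu(x)=1$, $\mu(x^2)=-1$, and let $\sigma$ be the coboundary of $\mu$, so $\sigma(x,x)=-1$. In $\Cstarred(\Gamma,\sigma)$ the element $u_x$ is a Haar unitary and $u_{x^2}=\sigma(x,x)^{-1}u_x^2=-u_x^2$, whence $\|a\|_{\Cstarred(\Gamma,\sigma)}=\sup_{|z|=1}|1+z-z^2|=\sqrt{5}$, while $\|a\|_{\Cstarred(\Gamma)}=\sup_{|z|=1}|1+z+z^2|=3$; correspondingly $\rmr_\sigma(a^{*_\sigma}*_\sigma a)=5\neq 9=\rmr(a^*a)$, and the $\ell^2$-growth of the twisted powers is strictly (exponentially) smaller than that of the untwisted ones. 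So $\sigma$-insensitivity fails as soon as $\supp a$ has multiplicative collisions, which is the typical situation for normal elements; collision-freeness is the substance of the free-subsemigroup argument, not a convenience. Be aware that this computation also puts pressure on the chain $\rmr_2(a)=\rmr_\sigma(a)$ used in the paper's own one-line proof, where the twisted and untwisted $\ltwo$-spectral radii are identified: the point at which you got stuck is exactly the point that needs a genuine argument (or additional hypotheses on $\supp a$), and your proposal neither supplies it nor can be repaired by the comparison you suggest.
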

\begin{proof}
  Using Proposition~\ref{prop:transfer-RD-estimate}, the proof of \cite[Corollary 5.5]{gerasimovaosin20} shows that the spectral radius of $a$ in both $\Cstarred(\Gamma)$ and $\Cstarred(\Gamma, \sigma)$ agrees with the $\ltwo$-spectral radius, that is $\rmr(a) = \rmr_2(a) = \rmr_\sigma(a)$.  As $a$ is normal, we can infer that $\|a\|_{\Cstarred(\Gamma, \sigma)} = \|a\|_{\Cstarred(\Gamma)}$ holds.
\end{proof}

%%%%%%%%%%
%% Bibliography
%%%%%%%%%%

% Alternative to Biber. Possible styles plain, mybibtexstyle
%\bibliographystyle{mybibtexstyle}
% \bibliography{mybibliography}{}
% \bibliographystyle{plain}

% BibLaTex
{\small
  \printbibliography
}

%%%%%%%%%%
%% Address
%%%%%%%%%%

\vspace{2em}

\noindent \begin{minipage}[t]{\linewidth}
  \small
  Sven Raum, Institute of Mathematics, University of Potsdam, Campus Golm, Haus 9, Karl-Liebknecht-Str. 24-25, Germany \\
  E-mail address: sven.raum@uni-potsdam.de
\end{minipage}

\end{document}